\def\ps@pprintTitle{%
	\let\@oddhead\@empty
	\let\@evenhead\@empty
	\let\@oddfoot\@empty
	\let\@evenfoot\@oddfoot
}
\definecolor{mred0}{rgb}{0,0.406,0.596}
\definecolor{mcyan}{rgb}{0,0.501,0.501}
\definecolor{mcyan}{rgb}{0,0.501,0.501}
\definecolor{mblue}{rgb}{0,0.406,0.796}
\definecolor{mcgreen}{rgb}{0.180,0.545,0.41}
\definecolor{mred0}{rgb}{0,0.406,0.596}
\definecolor{mred3}{rgb}{0.6,0,0.8}
\definecolor{DarkGreen}{rgb}{0.278,0.701,0.913} 
\definecolor{mred1}{rgb}{0.180,0.545,0.341}
\numberwithin{equation}{section} 
\newtheorem{theorem}{Theorem}[section]
\newtheorem{remark}{Remark}[section]
\journal{Communications in Nonlinear Science and Numerical Simulation}
\date{}
\begin{document}
	\maketitle
	\selectlanguage{english}
		\begin{abstract}
A crucial feature of reaction–diffusion epidemic models is the incidence function, which characterizes disease transmission dynamics. Over the past few decades, many studies have investigated the behavior of such models under various incidence functions. However, the question of how to appropriately select a suitable incidence function remains largely unexplored. This paper addresses this issue by proposing an intuitive theoretical framework that recasts the original problem as determining the contributions of different incidence functions to the dynamics based on given observations. Specifically, the choice of an incidence function is linked to a weight assigned to it. Mathematically, this leads to a PDE-constrained optimization problem, where the objective is to identify the weights of a convex combination of multiple incidence functions that best approximate the experimental observations generated by the model. We first establish the Fréchet differentiability of the parameter-to-state operator and then derive the optimality conditions for the weights using a suitable adjoint problem. Finally, we illustrate our approach with a numerical example based on the Landweber iteration algorithm.\\[1ex]
		\textit{Keywords}: 	Partial differential equations, PDE-constrained optimization, Epidemic model, Parameter identification, Numerical simulation\\
		\textit{2020 MSC}: 35K57, 49K20, 49J20, 92D30, 62P10
	\end{abstract}
	\begin{frontmatter}
\title{A note on quantifying the contributions of incidence functions in spatio-temporal epidemic models}

\author{Mohamed Mehdaoui \corref{mycorrespondingauthor}}
\cortext[mycorrespondingauthor]{Corresponding author:  m.mehdaoui@ueuromed.org}
\address{Euromed University of Fez, UEMF, 30 000, Fez, Morocco}

\author{Mouhcine Tilioua}
\address{MAMCS Group, FST Errachidia, Moulay Ismail University of Meknes, P.O. Box 509, Boutalamine 52000, Errachidia, Morocco}

\ead{m.tilioua@umi.ac.ma}
\end{frontmatter}

	\section{Introduction and motivation}\label{intro}
\subsection{Background}
Throughout centuries, reaction--diffusion models have shown to be among the simplest yet effective ways of capturing the spatial random mobility of individuals in epidemic scenarios \cite{capasso1978global,capasso1993mathematical,coronel2019note,chen2021global,mehdaoui2023analysis,parkinson2023analysis,mehdaoui2023optimal,colli2024global,mehdaoui2024optimal,chang2024time,mehdaoui2024well}. Following the pioneering work by Capasso \cite{capasso1978global}, various extensions of the so-called Susceptible--Infected--Recovered (SIR) model have been proposed. Given an open bounded set $\mathcal{O} \subset \mathbb{R}^N$ ($N \in \{1,2,3\}$) with smooth boundary $\partial \mathcal{O}$ and $T>0$, in this paper, we devote our focus to the following class:
\begin{equation}\label{proto}
	\begin{cases}
		\begin{split}
			&S_t-d_1 \Delta S=-\sum_{i=1}^{m} \theta_i f_i(S,I), \quad &&\text{ in } Q_T:=\mathcal{O} \times (0,T),\\
			&I_t-d_2 \Delta I=\sum_{i=1}^{m} \theta_i f_i(S,I)-\gamma I, \quad &&\text{ in } Q_T,\\
			&R_t-d_3 \Delta R=\gamma I, \quad &&\text{ in } Q_T,\\
			&\nabla S \cdot \overrightarrow{n}=\nabla I \cdot \overrightarrow{n}=\nabla R \cdot \overrightarrow{n}=0, \quad &&\text{ on } \Sigma_T:=\partial \mathcal{O} \times (0,T),\\
			&(S(.,0),I(.,0),R(.,0))=(S_0,I_0,R_0), \quad &&\text{ in } \mathcal{O}.
		\end{split}
	\end{cases}
\end{equation}
Herein, $S$, $I$ and $R$ denote the densities of the susceptible, infected and recovered populations diffusing at positive rates $d_1$, $d_2$ and $d_3$ and starting from positive initial states $S_0$, $I_0$ and $R_0$, respectively. Moreover, $\overrightarrow{n}$ stands for the outward normal unit vector, while the notations $u_t$, $\Delta u$ and $\nabla u$ refer to the time partial derivative, the laplacian and the gradient of a given function $u$. Additionally, $\gamma$ stands for the positive  recovery rate.  On the other hand, given $m \in \mathbb{N}^*$ and $\left(\theta_1,\cdots, \theta_m\right) \in [0,\infty)^m$ such that $\displaystyle \sum_{i=1}^{m} \theta_i=1$, the function $(S,I) \mapsto \displaystyle \sum_{i=1}^{m} \theta_i f_i(S,I)$ describes the disease incidence between the susceptible and the infected classes, where for all $i \in \{1,\cdots,m\}$, $f_i : [0,\infty)^2 \rightarrow [0,\infty)$ denotes the $i^{th}$  incidence function contributing to the dynamics at a rate $\theta_i$ and is assumed to be continuously differentiable and satisfying the following standard condition:
\begin{equation*}
	f_i(0,I)=f_i(S,0)=0, \forall (S,I)\in [0,\infty)^2.
\end{equation*}
\begin{remark} The parameters \(\theta_1, \cdots, \theta_m\) are mathematical weights, but they also have biological significance. Specifically, \(\theta_i\) captures the degree at which the biological factor modeled by \(f_i\) influences the disease dynamics. The choice of \(f_i\) should be based on empirical data or biological theory, which may vary depending on the context. \end{remark}
\subsection{The Case of One Incidence Function $(m=1)$: A Key Limitation}
Let us consider the particular case where \( f_i \equiv f \). In this case, Model~\eqref{proto} reduces to the classical scenario in which only a single type of incidence function contributes to the dynamics. From a practical point of view, such a case is overly restrictive and provides only one degree of freedom to describe the complex dynamics of infectious diseases. This limitation becomes particularly pronounced in real-world scenarios, as observed during the COVID-19 pandemic and other recent outbreaks, where multiple interconnected factors simultaneously influenced disease transmission. For instance, during the COVID-19 pandemic, disease spread was not solely determined by direct interactions between susceptible and infected individuals. Various interventions, such as mask mandates, lockdowns, and vaccination campaigns, substantially modified transmission dynamics. Relying on a standard bilinear function,
$(S, I) \mapsto \beta SI$
\cite{mehdaoui2023dynamical,mmehdaoui2023analysis,della2025optimality} may fail to account for these dynamic behavioral and policy-driven changes. Similarly, when considering the protective behaviors adopted by individuals (e.g., social distancing and hand hygiene), a Holling-type function,
$(S, I) \mapsto \dfrac{\beta SI}{1 + aI}\; (a>0)$
\cite{buonomo2008dynamics,li2014traveling}, might be more appropriate, but it still neglects the effects of other critical factors, such as population density or repeated exposure risks. In densely populated regions or during superspreader events, the interplay between high population contact rates and limited healthcare resources can further complicate transmission dynamics. Such scenarios may require alternative choices such as the Beddington-DeAngelis function,
$(S, I) \mapsto \dfrac{\beta SI}{1 + aI + bS} \; (a,b>0)$
\cite{huang2009global}, to capture mutual interference effects and saturation phenomena. Alternatively, the influence of the total population size, particularly relevant in COVID-19 outbreaks within urban centers, can be modeled for example by using
$(S, I) \mapsto \dfrac{\beta SI}{S + I + R}$
\cite{zhang2021stationary}. Moreover, recent diseases, including COVID-19, have highlighted the role of repeated exposure risks in transmission. The disproportionate impact of certain environments, such as workplaces, schools, or households, can be modeled with incidence functions of the form
$(S, I) \mapsto \beta SI(1 + kI) \; (k>0)$
\cite{mehdaoui2024optimal,buonomo2008dynamics}, which account for enhanced risks due to double or cumulative exposures. These examples underscore the inadequacy of relying on a single incidence function in the modeling of modern infectious diseases. The variety of factors influencing transmission, ranging from individual behavior to policy measures, population density, and healthcare infrastructure, calls for the adoption of flexible and multifactorial models. Simplistic assumptions risk failing to capture critical dynamics, especially in complex outbreaks such as COVID-19, where multiple mechanisms are at play simultaneously.


\subsection{Towards the Open Question of Selecting the Suitable Incidence Function}
In comparison to questions related to well-posedness and asymptotic behavior, considerable attention to identifying the parameters and capturing disease transmission has not been fully addressed, as evidenced by the limited literature from Xiang and Liu \cite{xiang2015solving}, Coronel et al \cite{coronel2019note,coronel2021existence}, and more recently Chang et al. \cite{chang2024time}. While previous literature has focused on identifying the disease transmission rate, it assumes a fixed incidence function, $f_i \equiv f$, without simultaneously ensuring that this choice adequately reflects the observed dynamics. This approach implicitly prioritizes the biological factors captured by the chosen incidence function, at the expense of potentially overlooking other significant factors that could be represented by alternative forms of $f_i$. Furthermore, in environments where multiple factors interact, such as public health interventions, individual behaviors, and population structure, the risk of using a single, fixed incidence function becomes more pronounced. It is crucial to recognize that different incidence functions, when calibrated, can yield substantially different results, leading to disparate model predictions. Without carefully considering the contributions of each factor to the disease dynamics, the model may inaccurately reflect the true transmission process. In particular, for complex systems where various mechanisms coexist, such as those observed in the COVID-19 pandemic, failing to account for the weight of each incidence function could lead to a substantial mismatch between the calibrated model and real-world observations. This mismatch is often caused by neglecting critical factors like behavioral changes, intervention effects, or population-specific variables. Thus, it is essential to identify the weight that each incidence function carries in the dynamics when proceeding to calibration. By doing so, we ensure that the chosen incidence functions are appropriate and representative of the observed data, preventing the risk of erroneous conclusions or overly simplistic models that fail to capture the full complexity of disease transmission. In this paper, we propose an intuitive approach to address this issue, which we believe should be carried out simultaneously with the disease transmission identification step developed in \cite{coronel2019note,chang2024time,xiang2015solving,coronel2021existence}. More precisely, we address the following theoretical question:
\begin{enumerate}[label=\textcolor{blue}{(Q)}]  
	\item \label{qs} \textit{At which weight does the \(i^{th}\) incidence function contribute to the dynamics of Model \eqref{proto}?}  
\end{enumerate}

\subsection{Addressing the Limitation with a New Approach}
From the mathematical standpoint, Question \ref{qs} can be recast into the following PDE-constrained optimization problem: 
\begin{equation}\label{prob}
	\begin{cases}
		\begin{split}
			&\underset{\theta \in \mathscr{U}^\theta_{ad}}{\min}\displaystyle \mathcal{J}\left(\theta,S,I\right):= \int_{Q_T} \vert S-S^{ob} \vert^2dxdt+\int_{Q_T} \vert I-I^{ob} \vert^2dxdt+\dfrac{\sigma}{2} \Vert \theta \Vert_{\mathbb{R}^m}^2,\\
			&\text{such that } (S,I)  \text{ satisfies Model } \eqref{proto} \text{ in a strong sense},
		\end{split} 
	\end{cases} 
\end{equation}
where $S^{ob}, I^{ob} \in L^2(Q_T)$ are given observations and $\sigma>0$. On the other hand,
$$
\mathscr{U}^\theta_{ad}:=\left\{\theta \in \mathbb{R}^m : \quad \theta_i \geq 0, \forall i \in \{1,\cdots,m\} \text{ and } \sum_{i=1}^m \theta_i=1 \right\}.
$$
To justify the constraint in Problem \eqref{prob}, we consider the following parameter-to-state operator
\begin{align*}
	\mathscr{F}: \;&\mathscr{U}^\theta_{ad} \longrightarrow L^2(Q_T)^3\\
	& \theta \mapsto (S_\theta,I_\theta,R_\theta),
\end{align*}
such that $(S_\theta,I_\theta,R_\theta)$ is the strong solution to Model \eqref{proto} with the parameter $\theta$. 
Assuming hereafter that 

\begin{equation}
	(S_0,I_0, R_0) \in \left\{u \in H^2(\mathcal{O})^3: \quad \nabla u_i \cdot \overrightarrow{n}=0  \text{ on } \partial \mathcal{O}, \forall i \in \{1,2,3\}\right\},
\end{equation}
and proceeding along the same lines of \cite[Theorems 1 and 3]{mehdaoui2023optimal}, it can be proved that $\mathscr{F}$ is well-defined. Moreover, the following regularity holds:
$$
\mathscr{F}(\theta) \in L^2(0,T;H^2(\mathcal{O}))^3  \cap L^\infty(0,T;H^1({\mathcal{O}}))^3  \cap L^\infty(Q_T)^3.
$$
Moreover, there exists a positive constant $C_1$ independent of $\theta$, such that 
\begin{equation}\label{es}
	\Vert \mathscr{F}(\theta) \Vert_{L^2(0,T;H^2(\mathcal{O}))^3}  + \Vert \mathscr{F}(\theta) \Vert_{L^\infty(0,T;H^1({\mathcal{O}}))^3}+\Vert \mathscr{F}(\theta) \Vert_{L^\infty(Q_T)^3}  \leq C_1.
\end{equation}
\noindent
Consequently, Problem \eqref{prob} reduces to
\begin{equation}\label{prob2}
	\underset{\theta \in \mathscr{U}^\theta_{ad}}{\min}\displaystyle \mathcal{J}\left(\theta,\mathscr{F}(\theta)\right):= \int_{Q_T} \vert S_\theta-S^{ob} \vert^2dxdt+\int_{Q_T} \vert I_\theta-I^{ob} \vert^2dxdt+\dfrac{\sigma}{2} \Vert \theta \Vert_{\mathbb{R}^m}^2.
\end{equation}

We arrange the remaining of this paper as follows. In Section \ref{section2}, we establish the Fréchet differentiability of the parameter-to-state operator $\mathscr{F}$. In Section \ref{section3}, we derive the necessary optimality conditions for optimal local solutions to Problem \eqref{prob2}. In Section \ref{section4}, we provide the outcomes of a numerical simulation based on the Landweber iteration algorithm, in order to support our proposed theoretical approach. Finally, we devote Section \ref{section5} to exploring some future directions.
\section{Fréchet differentiabiliy of the parameter-to-state operator}
\label{section2}
We state the main result of this section as follows.
\begin{theorem}\label{frechet}
	It holds that $\mathscr{F}$ is Fréchet-differentiable with a Fréchet derivative given by $\mathscr{F}^\prime(\theta)(\tilde{\theta})=(\overline{S},\overline{I},\overline{R}),\; \forall \theta \in \mathscr{U}_{ad}^\theta, \forall \tilde{\theta} \in \mathbb{R}^m$, where
	\begin{equation}\label{protosens}
		\begin{cases}
			\begin{split}
				&\overline{S}_t-d_1 \Delta \overline{S}=-\sum_{i=1}^{m} \tilde{\theta}_i f_i(S_\theta,I_\theta)-\sum_{i=1}^{m} {\theta}_i \left(\partial_S f_i(S_\theta,I_\theta)\overline{S}+\partial_I f_i(S_\theta,I_\theta)\overline{I} \right), \quad &&\text{ in } Q_T,\\
				&\overline{I}_t-d_2 \Delta \overline{I}=\sum_{i=1}^{m} \tilde{\theta}_i f_i(S_\theta,I_\theta)+\sum_{i=1}^{m} {\theta}_i \left(\partial_S f_i(S_\theta,I_\theta)\overline{S}+\partial_I f_i(S_\theta,I_\theta)\overline{I} \right)-\gamma \overline{I}, \quad &&\text{ in } Q_T,\\
				&\overline{R}_t-d_3 \Delta \overline{R}=\gamma \overline{I}, \quad &&\text{ in } Q_T,\\
				&\nabla \overline{S} \cdot \overrightarrow{n}=\nabla \overline{I} \cdot \overrightarrow{n}=\nabla \overline{R} \cdot \overrightarrow{n}=0, \quad &&\text{ on } \Sigma_T,\\
				&(\overline{S}(.,0),\overline{I}(.,0),\overline{R}(.,0))=(0,0,0), \quad &&\text{ in } \mathcal{O}.
			\end{split}
		\end{cases}
	\end{equation}
\end{theorem}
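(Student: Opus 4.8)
The plan is to run the classical three-step argument for Fréchet-differentiability of a control-to-state map: first show that the linearized (sensitivity) system \eqref{protosens} is itself well posed, so that $\tilde\theta\mapsto(\overline S,\overline I,\overline R)$ is a well-defined bounded linear operator $\mathbb{R}^m\to L^2(Q_T)^3$ and hence the only possible candidate for $\mathscr{F}'(\theta)$; second, establish a Lipschitz bound for $\mathscr{F}$ in norms \emph{stronger} than $L^2(Q_T)$; and third, estimate the remainder $w:=\mathscr{F}(\theta+\tilde\theta)-\mathscr{F}(\theta)-(\overline S,\overline I,\overline R)$ via energy estimates and Gronwall's inequality, reducing the whole problem to controlling the quadratic Taylor remainders of the $f_i$. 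Throughout, $\theta\in\mathscr{U}^\theta_{ad}$ is fixed, $(S_\theta,I_\theta,R_\theta)=\mathscr{F}(\theta)$, and for $\tilde\theta$ small I work with the natural extension of $\mathscr{F}$ to a neighborhood of $\mathscr{U}^\theta_{ad}$ in $\mathbb{R}^m$ on which \eqref{es} persists (with a possibly larger constant); in particular every state that occurs takes values in a fixed compact set $K\subset[0,\infty)^2$, so that, since $f_i\in C^1$, the coefficients $\partial_S f_i(S_\theta,I_\theta)$ and $\partial_I f_i(S_\theta,I_\theta)$ belong to $L^\infty(Q_T)$.

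For the first step I would observe that \eqref{protosens} is a linear, uniformly parabolic system with $L^\infty(Q_T)$ coefficients, homogeneous Neumann data, zero initial datum and source $\pm\sum_i\tilde\theta_i f_i(S_\theta,I_\theta)$, which lies in $L^\infty(0,T;H^1(\mathcal O))$ because $\nabla f_i(S_\theta,I_\theta)=\partial_S f_i(S_\theta,I_\theta)\nabla S_\theta+\partial_I f_i(S_\theta,I_\theta)\nabla I_\theta\in L^\infty(0,T;L^2)$; proceeding as in \cite[Theorems 1 and 3]{mehdaoui2023optimal} it has a unique strong solution in $\big(L^2(0,T;H^2(\mathcal O))\cap L^\infty(0,T;H^1(\mathcal O))\big)^3$, and a standard test-function/Gronwall estimate gives $\Vert(\overline S,\overline I,\overline R)\Vert_{L^2(Q_T)^3}\le C\Vert\tilde\theta\Vert_{\mathbb{R}^m}$; linearity in $\tilde\theta$ is immediate. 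For the second step, writing $(\delta S,\delta I,\delta R):=\mathscr{F}(\theta+\tilde\theta)-\mathscr{F}(\theta)$ and subtracting the two copies of \eqref{proto}, the increment solves a linear parabolic system with $L^\infty$ coefficients, zero initial datum and source bounded by $C\Vert\tilde\theta\Vert_{\mathbb{R}^m}$ in $L^2(Q_T)$ (using the local Lipschitz continuity of the $f_i$ on $K$ and the uniform boundedness of $f_i(S_{\theta+\tilde\theta},I_{\theta+\tilde\theta})$); a zeroth-order energy estimate yields $\Vert(\delta S,\delta I,\delta R)\Vert_{L^\infty(0,T;L^2)^3}\le C\Vert\tilde\theta\Vert_{\mathbb{R}^m}$, and then testing with $-\Delta\delta S,-\Delta\delta I,-\Delta\delta R$ together with elliptic (Neumann) regularity upgrades this to $\Vert(\delta S,\delta I,\delta R)\Vert_{L^\infty(0,T;H^1)^3}+\Vert(\delta S,\delta I,\delta R)\Vert_{L^2(0,T;H^2)^3}\le C\Vert\tilde\theta\Vert_{\mathbb{R}^m}$. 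Since $N\le 3$ gives $H^2(\mathcal O)\hookrightarrow L^\infty(\mathcal O)$, interpolating $L^\infty(0,T;L^2)$ with $L^2(0,T;H^2)$ (through $\Vert u\Vert_{L^4(\mathcal O)}^2\le\Vert u\Vert_{L^2(\mathcal O)}\Vert u\Vert_{L^\infty(\mathcal O)}$) yields the key higher-integrability bound $\Vert(\delta S,\delta I,\delta R)\Vert_{L^4(Q_T)^3}\le C\Vert\tilde\theta\Vert_{\mathbb{R}^m}$, which the last step will exploit.

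For the third step, set $w:=(\delta S-\overline S,\,\delta I-\overline I,\,\delta R-\overline R)$ and use, for each $i$, the first-order Taylor identity
\[
f_i(S_\theta+\delta S,\,I_\theta+\delta I)-f_i(S_\theta,I_\theta)=\partial_S f_i(S_\theta,I_\theta)\,\delta S+\partial_I f_i(S_\theta,I_\theta)\,\delta I+\rho_i,\qquad \rho_i:=\int_0^1\!\big(\nabla f_i(S_\theta+t\delta S,I_\theta+t\delta I)-\nabla f_i(S_\theta,I_\theta)\big)\!\cdot\!(\delta S,\delta I)\,dt .
\]
Substituting it into the increment system and subtracting \eqref{protosens}, one checks that $w$ solves the same linear parabolic system as \eqref{protosens} driven by the extra source $\phi:=-\sum_i\theta_i\rho_i-\sum_i\tilde\theta_i\big(f_i(S_\theta+\delta S,I_\theta+\delta I)-f_i(S_\theta,I_\theta)\big)$ in the $\overline S$-equation, $-\phi$ in the $\overline I$-equation, and nothing new in the $\overline R$-equation, all with zero initial datum; testing with $w$ and using Gronwall's inequality gives $\Vert w\Vert_{L^2(Q_T)^3}\le C\Vert\phi\Vert_{L^2(Q_T)}$. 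The $\tilde\theta$-part of $\phi$ is harmless: local Lipschitzness on $K$ plus the second step bound it in $L^2(Q_T)$ by $C\Vert\tilde\theta\Vert_{\mathbb{R}^m}\Vert(\delta S,\delta I)\Vert_{L^2(Q_T)}\le C\Vert\tilde\theta\Vert_{\mathbb{R}^m}^2=o(\Vert\tilde\theta\Vert_{\mathbb{R}^m})$. Everything then hinges on showing $\Vert\rho_i\Vert_{L^2(Q_T)}=o(\Vert\tilde\theta\Vert_{\mathbb{R}^m})$, and this is the step I expect to be the main obstacle, because $\delta S,\delta I$ are small only in integral norms, never pointwise, so the naive bound $|\rho_i|\le\omega_i(\Vert(\delta S,\delta I)\Vert_\infty)|(\delta S,\delta I)|$ is useless. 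The remedy: let $g:=|\delta S|+|\delta I|$ and $\omega_i$ be the (bounded) modulus of continuity of $\nabla f_i$ on $K$, so that $|\rho_i|\le\omega_i(g)\,g$ a.e.; then for every $\delta>0$,
\[
\Vert\rho_i\Vert_{L^2(Q_T)}^2\le\omega_i(\delta)^2\Vert g\Vert_{L^2(Q_T)}^2+M^2\!\!\int_{\{g>\delta\}}\!\!g^2\le\omega_i(\delta)^2\Vert g\Vert_{L^2(Q_T)}^2+M^2\,\big|\{g>\delta\}\big|^{1/2}\Vert g\Vert_{L^4(Q_T)}^2,\qquad M:=\sup_K|\nabla f_i|.
\]
By the second step $\Vert g\Vert_{L^2(Q_T)},\Vert g\Vert_{L^4(Q_T)}\le C\Vert\tilde\theta\Vert_{\mathbb{R}^m}$, and Chebyshev gives $\big|\{g>\delta\}\big|\le\delta^{-2}\Vert g\Vert_{L^2(Q_T)}^2\le C\delta^{-2}\Vert\tilde\theta\Vert_{\mathbb{R}^m}^2$, whence $\Vert\rho_i\Vert_{L^2(Q_T)}^2/\Vert\tilde\theta\Vert_{\mathbb{R}^m}^2\le C\,\omega_i(\delta)^2+C\delta^{-1}\Vert\tilde\theta\Vert_{\mathbb{R}^m}$; letting $\tilde\theta\to0$ and then $\delta\to0$ closes it. Combining the two estimates for $\phi$ gives $\Vert w\Vert_{L^2(Q_T)^3}=o(\Vert\tilde\theta\Vert_{\mathbb{R}^m})$, which is exactly Fréchet-differentiability of $\mathscr{F}$ at $\theta$ with derivative $\tilde\theta\mapsto(\overline S,\overline I,\overline R)$; since $\theta\in\mathscr{U}^\theta_{ad}$ was arbitrary, the proof is complete.
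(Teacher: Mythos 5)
Your proposal is correct, and its backbone coincides with the paper's proof: both form the remainder (your $w$, the paper's $(Z_1,Z_2,Z_3)$) as the increment of $\mathscr{F}$ minus the solution of the linearized system \eqref{protosens}, test the resulting parabolic system with the remainder itself, and conclude via Gronwall. Where you genuinely diverge is in the treatment of the nonlinearity, and this is precisely the step the paper handles only crudely: the paper bounds $f_i(S^{\theta+\tilde{\theta}},I^{\theta+\tilde{\theta}})-f_i(S^{\theta},I^{\theta})$ by the mean value theorem and Cauchy--Schwarz, arriving at $\tfrac12\tfrac{d}{dt}\Vert Z\Vert^2\le C_2\Vert Z\Vert^2+C_3\Vert\tilde{\theta}\Vert_{\mathbb{R}^m}$ and hence $\Vert Z\Vert_{L^\infty(0,T;L^2)}^2\le C\Vert\tilde{\theta}\Vert_{\mathbb{R}^m}$, i.e.\ only an $O(\Vert\tilde{\theta}\Vert^{1/2})$ rate for the remainder, whereas the definition of the Fréchet derivative demands $o(\Vert\tilde{\theta}\Vert)$. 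Your two extra ingredients, the Lipschitz stability of the state in $L^\infty(0,T;H^1)\cap L^2(0,T;H^2)$ upgraded by interpolation to a linear rate in $L^4(Q_T)$, and the modulus-of-continuity/Chebyshev splitting of the Taylor remainder $\rho_i$, supply exactly the missing superlinear smallness, and they do so under the paper's mere $C^1$ assumption on the $f_i$ (under $C^2$ one could shortcut via $\vert\rho_i\vert\lesssim g^2$ and the $L^4$ bound). Two points to make explicit when writing it up: since $\mathscr{U}^\theta_{ad}$ has empty interior in $\mathbb{R}^m$, state precisely in what sense $\mathscr{F}$ is extended to a neighbourhood (or formulate differentiability relative to the admissible set with $\theta+\tilde{\theta}\in\mathscr{U}^\theta_{ad}$, as the paper implicitly does) and justify that the a priori bound \eqref{es} persists there; and note that the linear $L^4$ rate really does require your $H^1/H^2$ estimates, since the cheaper bound using only the uniform $L^\infty$ bound gives merely $\Vert(\delta S,\delta I)\Vert_{L^4(Q_T)}\lesssim\Vert\tilde{\theta}\Vert^{1/2}$, which would not close the Chebyshev step.
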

\begin{proof}
	Let $\tilde{\theta} \in \mathbb{R}^m$ such that $\theta+\tilde{\theta}\in \mathscr{U}^\theta_{ad}$. 
	We denote 
	$\mathscr{F}(\theta+\tilde{\theta} )\overset{\Delta}{=}(S^{\theta+\tilde{\theta}},I^{\theta+\tilde{\theta}},R^{\theta+\tilde{\theta}})$
	and
	$\mathscr{F}(\theta)\overset{\Delta}{=}(S^{\theta},I^{\theta},R^{\theta}).$
	We further introduce the intermediate variable:
	$$
	(Z_1,Z_2,Z_3):=(S^{\theta+\tilde{\theta}},I^{\theta+\tilde{\theta}},R^{\theta+\tilde{\theta}}) -(S^{\theta},I^{\theta},R^{\theta})-(\overline{S},\overline{I},\overline{R}).
	$$
	By computing, we obtain
	\begin{small}
		\begin{equation}\label{b}
			\begin{cases}
				\begin{split}
					{Z_1}_t&-d_1 \Delta {Z_1}=-\sum_{i=1}^{m} \theta_i \left(f_i(S^{\theta+\tilde{\theta}},I^{\theta+\tilde{\theta}})-f_i(S^{\theta},I^{\theta})\right)-\sum_{i=1}^{m} \tilde{\theta}_i \left(f_i(S^{\theta+\tilde{\theta}},I^{\theta+\tilde{\theta}})\right.\\
					&\left.-f_i(S^{\theta},I^{\theta})\right)+\sum_{i=1}^{m} {\theta}_i \left(\partial_S f_i(S_\theta,I_\theta)\overline{S}+\partial_I f_i(S_\theta,I_\theta)\overline{I} \right),  &&\text{ in } Q_T,\\[1.6ex]
					{Z_2}_t&-d_2 \Delta {Z_2}=\sum_{i=1}^{m} \theta_i \left(f_i(S^{\theta+\tilde{\theta}},I^{\theta+\tilde{\theta}})-f_i(S^{\theta},I^{\theta})\right)+\sum_{i=1}^{m} \tilde{\theta}_i \left(f_i(S^{\theta+\tilde{\theta}},I^{\theta+\tilde{\theta}})\right.\\
					&\left.-f_i(S^{\theta},I^{\theta})\right)+\sum_{i=1}^{m} {\theta}_i \left(\partial_S f_i(S_\theta,I_\theta)\overline{S}+\partial_I f_i(S_\theta,I_\theta)\overline{I} \right)\\
					&-\gamma {Z_2}, \quad &&\text{ in } Q_T,\\[1.6ex]
					{Z_3}_t&-d_3 \Delta {Z_3}=\gamma {Z_2}, \quad &&\text{ in } Q_T,\\
					\nabla {Z_1} &\cdot \overrightarrow{n}=\nabla {Z_2} \cdot \overrightarrow{n}=\nabla {Z_3} \cdot \overrightarrow{n}=0, \quad &&\text{ on } \Sigma_T,\\
					({Z_1}&(.,0),{Z_2}(.,0),{Z_3}(.,0))=(0,0,0), \quad &&\text{ in } \mathcal{O}.
				\end{split}
			\end{cases}
		\end{equation}
	\end{small}
	Now, multiplying the first, second and third equations of Problem \eqref{b} by $Z_1$, $Z_2$ and ${Z_3}$, respectively and integrating over $\mathcal{O}$ and recalling Estimate \eqref{es}, we use Cauchy-Schwarz inequality along with the mean value theorem on $f_i (i \in \{1,\cdots,m\})$, to eventually obtain that
	\begin{equation}
		\begin{split}
			\dfrac{1}{2} \dfrac{d}{dt} \left(\Vert Z_1 \Vert_{L^2(\mathcal{O})}^2+\Vert Z_2 \Vert_{L^2(\mathcal{O})}^2+\Vert Z_3 \Vert_{L^2(\mathcal{O})}^2\right) &\leq C_2 \left(\Vert Z_1 \Vert_{L^2(\mathcal{O})}^2+\Vert Z_2 \Vert_{L^2(\mathcal{O})}^2+\Vert Z_3 \Vert_{L^2(\mathcal{O})}^2\right)\\
			&+C_3 \Vert \tilde{\theta} \Vert_{\mathbb{R}^m},
		\end{split}
	\end{equation}
	where $C_2$ and $C_3$ are positive constants independent of $\theta$.
	
	\noindent
	By directly applying Gronwall's inequality and recalling that $$({Z_1}(.,0),{Z_2}(.,0),{Z_3}(.,0))=(0,0,0),$$ 
	we acquire that 
	$$
	\Vert Z_1 \Vert_{L^\infty(0,T;L^2(\mathcal{O}))}^2+\Vert Z_2 \Vert_{L^\infty(0,T;L^2(\mathcal{O}))}^2+\Vert Z_3 \Vert_{L^\infty(0,T;L^2(\mathcal{O}))}^2 \leq 2 C_3 \exp(C_2T) \Vert \tilde{\theta} \Vert_{\mathbb{R}^m}.
	$$
	Thus, the result is obtained by letting $\Vert \tilde{\theta} \Vert_{\mathbb{R}^m} \rightarrow 0$ and recalling the definition  of the Fréchet derivative.
\end{proof}
\section{Existence of an optimal solution and first-order necessary optimality conditions}
\label{section3}
We are now ready to derive the necessary optimality conditions satisfied by optimal local solutions to Problem \eqref{prob2}. Prior to that, let us first state a standard intermediate result ensuring that such solutions exist.
\begin{theorem}
	Problem \eqref{prob2} has at least a global solution $\theta^* \in \mathscr{U}_{ad}^\theta$.
\end{theorem}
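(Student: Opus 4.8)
The plan is to prove existence of a global minimizer via the direct method in the calculus of variations. First I would observe that the admissible set $\mathscr{U}^\theta_{ad}$ is a nonempty, compact subset of $\mathbb{R}^m$: it is the standard $(m-1)$-simplex, hence closed and bounded, and in finite dimensions this gives compactness. Since the functional $\mathcal{J}(\theta,\mathscr{F}(\theta))$ is bounded below by $0$, the infimum $\mu:=\inf_{\theta\in\mathscr{U}^\theta_{ad}}\mathcal{J}(\theta,\mathscr{F}(\theta))$ is a finite nonnegative real number, and we may extract a minimizing sequence $(\theta^n)_{n\in\mathbb{N}}\subset\mathscr{U}^\theta_{ad}$ with $\mathcal{J}(\theta^n,\mathscr{F}(\theta^n))\to\mu$.

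Next I would pass to the limit. By compactness of $\mathscr{U}^\theta_{ad}$, after extracting a subsequence (not relabeled), $\theta^n\to\theta^*$ in $\mathbb{R}^m$ with $\theta^*\in\mathscr{U}^\theta_{ad}$. The uniform a priori bound \eqref{es} shows that the states $(S_{\theta^n},I_{\theta^n},R_{\theta^n})=\mathscr{F}(\theta^n)$ are bounded in $L^2(0,T;H^2(\mathcal{O}))^3\cap L^\infty(0,T;H^1(\mathcal{O}))^3\cap L^\infty(Q_T)^3$; combining this with bounds on the time derivatives obtained from the PDE \eqref{proto} (the right-hand sides are bounded in $L^2(Q_T)$ thanks to \eqref{es} and the continuous differentiability of the $f_i$) and the Aubin--Lions lemma, one gets strong convergence of a further subsequence of $(S_{\theta^n},I_{\theta^n},R_{\theta^n})$ in $L^2(Q_T)^3$ to some limit $(S^*,I^*,R^*)$, together with weak convergence in $L^2(0,T;H^2(\mathcal{O}))^3$. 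The strong $L^2$ convergence of the states, the continuity of each $f_i$ and of its partials, and the convergence $\theta^n\to\theta^*$ allow us to pass to the limit in the weak formulation of \eqref{proto}, so that $(S^*,I^*,R^*)$ solves \eqref{proto} with parameter $\theta^*$; by uniqueness of the strong solution (part of $\mathscr{F}$ being well-defined) we conclude $(S^*,I^*,R^*)=\mathscr{F}(\theta^*)$.

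Finally I would use lower semicontinuity to identify the limit as a minimizer. The first two terms of $\mathcal{J}$ are $L^2(Q_T)$-norms of $S_{\theta^n}-S^{ob}$ and $I_{\theta^n}-I^{ob}$; by the strong $L^2$ convergence just established these converge to $\int_{Q_T}|S^*-S^{ob}|^2\,dxdt+\int_{Q_T}|I^*-I^{ob}|^2\,dxdt$, and the Tikhonov term $\tfrac{\sigma}{2}\|\theta^n\|_{\mathbb{R}^m}^2$ converges to $\tfrac{\sigma}{2}\|\theta^*\|_{\mathbb{R}^m}^2$ by continuity of the norm in $\mathbb{R}^m$. Hence $\mathcal{J}(\theta^*,\mathscr{F}(\theta^*))=\lim_n \mathcal{J}(\theta^n,\mathscr{F}(\theta^n))=\mu$, so $\theta^*$ is a global minimizer. (If one does not wish to invoke strong convergence for the data-fidelity terms, weak lower semicontinuity of the convex $L^2$-norms combined with weak $L^2$ convergence of the states suffices to get $\mathcal{J}(\theta^*,\mathscr{F}(\theta^*))\le\mu$, which already forces equality.)

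The only genuinely delicate point is the compactness/continuity of the parameter-to-state map: one must extract enough compactness from the regularity estimate \eqref{es} — via a time-derivative bound from the equations and an Aubin--Lions argument — to pass to the limit in the nonlinear incidence terms $\sum_i\theta_i^n f_i(S_{\theta^n},I_{\theta^n})$. Everything else (compactness of the simplex, boundedness below, continuity of the finite-dimensional Tikhonov term, lower semicontinuity of the $L^2$ misfit) is routine. In fact, since $\mathscr{U}^\theta_{ad}$ is finite-dimensional and compact, once one knows $\mathscr{F}$ is continuous from $\mathscr{U}^\theta_{ad}$ into $L^2(Q_T)^3$ — which is a mild strengthening of the well-posedness already asserted, or follows directly from the Fréchet differentiability in Theorem \ref{frechet} — the whole argument collapses to: a continuous real-valued function on a compact set attains its minimum.
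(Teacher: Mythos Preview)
Your proposal is correct and follows exactly the approach the paper indicates: the paper's proof simply states that the result is standard via the technique of minimizing sequences (citing \cite[Theorem 3.1]{mehdaoui2024optimal}) and omits the details. You have supplied precisely those omitted details, including the observation that compactness of the simplex $\mathscr{U}^\theta_{ad}$ together with continuity of $\mathscr{F}$ (implied by Theorem~\ref{frechet}) would in fact reduce the argument to a continuous function on a compact set attaining its minimum.
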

\begin{proof}
	The proof is standard and builds on the technique of minimizing sequences (see e.g. \cite[Theorem 3.1]{mehdaoui2024optimal}). We thus omit it here for brevity.	
\end{proof}

\noindent
The derivation of the necessary optimality conditions satisfied by local solutions to Problem \eqref{prob2} is obtained through the following well-posed adjoint problem:
\begin{equation}\label{adj}
	\begin{cases}
		\begin{split}
			&-{P_1}_t-d_1 \Delta {P_1}=\sum_{i=1}^{m} {\theta}_i \partial_S f_i(S_\theta^*,I_\theta^*)(P_2-P_1)+2(S_\theta^*-S^{ob}), \quad &&\text{ in } Q_T,\\[1.5ex]
			&-{P_2}_t-d_2 \Delta {P_2}=\sum_{i=1}^{m} {\theta}_i \partial_I f_i(S_\theta^*,I_\theta^*)(P_2-P_1)+\gamma (P_3-P_2)+2(I_\theta^*-I^{ob}), \quad &&\text{ in } Q_T,\\
			&-{P_3}_t-d_3 \Delta {P_3}=0, \quad &&\text{ in } Q_T,\\[1.5ex]
			&\nabla {P_1} \cdot \overrightarrow{n}=\nabla {P_2} \cdot \overrightarrow{n}=\nabla {P_3} \cdot \overrightarrow{n}=0, \quad &&\text{ on } \Sigma_T,\\[1.5ex]
			&({P_1}(.,T),{P_2}(.,T),{P_3}(.,T))=(0,0,0), \quad &&\text{ in } \mathcal{O}.
		\end{split}
	\end{cases}
\end{equation}
\begin{remark}
	Note that Problem \eqref{prob2} is non-convex, due to the non-linearity of the parameter-to-state operator $\mathscr{F}$. Generally, when it comes to such problems, one only establishes the necessary optimality conditions for local solutions \cite{mehdaoui2024optimal,mehdaoui2025state,mehdaoui2024new}. Additionally, when it comes to numerical implementations, the chosen minimization algorithm is usually expected to only generate local solutions. For a more detailed discussion on this subject, we refer the reader for example to \cite[p. 221]{troltzsch2010optimal}. 
\end{remark}
\noindent
The local characterization of the optimal (local) solution $\theta^*$ is stated as follows. 
\begin{theorem}
	Let $\theta^*$ be an optimal solution to Problem \eqref{prob2}. Then, the following variational inequality holds:
	\begin{equation}
		\sum_{i=1}^m \tilde{\theta}_i \left(\int_{Q_T} f_i(S_\theta^*,I_\theta^*)(P_2-P_1)dxdt+\sigma \theta^*_i\right) \geq 0, \quad \forall \tilde{\theta} \in \mathbb{R}^m,
	\end{equation}
	where $(S_\theta^*,I_\theta^*)$ denotes the component corresponding to the strong solution to Model \eqref{proto} with the parameter $\theta^*$, while $(P_1,P_2)$ denotes the component of the strong solution corresponding to Problem \eqref{adj}.
\end{theorem}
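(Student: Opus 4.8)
The plan is to derive the variational inequality from the first-order necessary condition for the reduced problem together with the adjoint identity. Since $\theta^*$ is a local minimizer of the reduced functional $\hat{\mathcal{J}}(\theta):=\mathcal{J}(\theta,\mathscr{F}(\theta))$ over the convex set $\mathscr{U}_{ad}^\theta$, the standard first-order condition reads $\hat{\mathcal{J}}'(\theta^*)(\theta-\theta^*)\geq 0$ for all $\theta\in\mathscr{U}_{ad}^\theta$. Writing $\tilde{\theta}=\theta-\theta^*$ for an admissible direction and using the chain rule with Theorem~\ref{frechet}, one has
\begin{equation*}
	\hat{\mathcal{J}}'(\theta^*)(\tilde{\theta})=2\int_{Q_T}(S_\theta^*-S^{ob})\,\overline{S}\,dxdt+2\int_{Q_T}(I_\theta^*-I^{ob})\,\overline{I}\,dxdt+\sigma\sum_{i=1}^m\theta^*_i\tilde{\theta}_i,
\end{equation*}
where $(\overline{S},\overline{I},\overline{R})=\mathscr{F}'(\theta^*)(\tilde{\theta})$ solves the linearized (sensitivity) system \eqref{protosens} with $(S_\theta,I_\theta)$ replaced by $(S_\theta^*,I_\theta^*)$.

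The core of the argument is to rewrite the two integral terms involving $\overline{S},\overline{I}$ using the adjoint state $(P_1,P_2,P_3)$ solving \eqref{adj}. I would multiply the first equation of \eqref{protosens} by $P_1$, the second by $P_2$, the third by $P_3$, integrate over $Q_T$, and integrate by parts in time and space (using the homogeneous Neumann boundary conditions for both states and adjoints, and the fact that $\overline{S},\overline{I},\overline{R}$ vanish at $t=0$ while $P_1,P_2,P_3$ vanish at $t=T$). Summing the three resulting identities, the terms $-d_j\Delta$ transfer symmetrically, the time derivatives produce no boundary contributions, and the coupling terms reorganize: on the one hand the right-hand side of \eqref{protosens} contributes $\sum_i\tilde{\theta}_i f_i(S_\theta^*,I_\theta^*)(P_2-P_1)$ plus the linearized coupling $\sum_i\theta^*_i(\partial_S f_i\,\overline{S}+\partial_I f_i\,\overline{I})(P_2-P_1)$ and $\gamma\overline{I}(P_3-P_2)$; on the other hand testing \eqref{adj} against $(\overline{S},\overline{I},\overline{R})$ produces exactly those same linearized-coupling and $\gamma$-terms back, together with $2(S_\theta^*-S^{ob})\overline{S}+2(I_\theta^*-I^{ob})\overline{I}$. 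Equating the two computations, the linearized coupling terms and the $\gamma$-terms cancel, leaving
\begin{equation*}
	2\int_{Q_T}(S_\theta^*-S^{ob})\overline{S}\,dxdt+2\int_{Q_T}(I_\theta^*-I^{ob})\overline{I}\,dxdt=\sum_{i=1}^m\tilde{\theta}_i\int_{Q_T}f_i(S_\theta^*,I_\theta^*)(P_2-P_1)\,dxdt.
\end{equation*}

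Substituting this identity into the expression for $\hat{\mathcal{J}}'(\theta^*)(\tilde{\theta})$ gives
\begin{equation*}
	\hat{\mathcal{J}}'(\theta^*)(\tilde{\theta})=\sum_{i=1}^m\tilde{\theta}_i\left(\int_{Q_T}f_i(S_\theta^*,I_\theta^*)(P_2-P_1)\,dxdt+\sigma\theta^*_i\right),
\end{equation*}
and the variational inequality $\hat{\mathcal{J}}'(\theta^*)(\tilde{\theta})\geq 0$ is precisely the claimed statement. One subtlety I would address carefully is the range of admissible $\tilde{\theta}$: the first-order condition directly yields the inequality only for directions $\tilde{\theta}=\theta-\theta^*$ with $\theta\in\mathscr{U}_{ad}^\theta$, i.e. for $\tilde{\theta}$ in the tangent cone, whereas the theorem as stated asserts it for all $\tilde{\theta}\in\mathbb{R}^m$; I would note that since $\sum_i\theta_i=1$ forces admissible directions to satisfy $\sum_i\tilde{\theta}_i=0$, the statement should be read modulo this constraint (or the functional $\mathcal{J}$ extended so that the normalization is handled as in the cited references). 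The main obstacle is bookkeeping the integration by parts so that every coupling term on both sides matches and cancels; the regularity in \eqref{es} and for the linearized and adjoint systems is more than enough to justify all the integrations by parts, so no analytic difficulty arises beyond careful algebra.
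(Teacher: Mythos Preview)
Your proposal is correct and follows essentially the same route as the paper: derive the directional derivative of the reduced functional via the chain rule and Theorem~\ref{frechet}, then test the sensitivity system \eqref{protosens} against the adjoint state $(P_1,P_2,P_3)$ and integrate by parts to obtain the identity relating $\int(S_\theta^*-S^{ob})\overline{S}+\int(I_\theta^*-I^{ob})\overline{I}$ to $\sum_i\tilde{\theta}_i\int f_i(P_2-P_1)$, and substitute. The subtlety you raise about the admissible directions $\tilde{\theta}$ (tangent cone of $\mathscr{U}_{ad}^\theta$ at $\theta^*$ rather than all of $\mathbb{R}^m$) is well spotted and is equally present in the paper's own argument, which also restricts to perturbations with $\theta^*+\epsilon\tilde{\theta}\in\mathscr{U}_{ad}^\theta$.
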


\begin{proof}
	Let $\epsilon>0$ and $\theta^*$ be an optimal solution to Problem \eqref{prob2}. Moreover, let $\tilde{\theta} \in \mathbb{R}^m$ such that $$\theta^\epsilon:=\theta^*+\epsilon \tilde{\theta} \in \mathscr{U}_{ad}^\theta.$$
	Denote by $(S^\epsilon_\theta,I^\epsilon_\theta,R^\epsilon_\theta)$ the corresponding solution to Model \eqref{proto} with a corresponding parameter $\theta^\epsilon$, and by $(S_\theta^*,I_\theta^*,R_\theta^*)$ the one corresponding to $\theta^*$. Moreover, set for simplicity
	$$
	\overline{S}^\epsilon\overset{\Delta}{=}\dfrac{S^\epsilon_\theta-S^*_\theta}{\epsilon}, \; \overline{I}^\epsilon\overset{\Delta}{=}\dfrac{I^\epsilon_\theta-I^*_\theta}{\epsilon}, \;
	\overline{R}^\epsilon\overset{\Delta}{=}\dfrac{R^\epsilon_\theta-R^*_\theta}{\epsilon}.
	$$
	By computing, we obtain that 
	\begin{equation}\label{protosenses}
		\begin{cases}
			\begin{split}
				\overline{S}^\epsilon_t&-d_1 \Delta \overline{S}^\epsilon=-\sum_{i=1}^{m} \tilde{\theta}_i f_i(S_\theta^\epsilon,I_\theta^\epsilon)-\sum_{i=1}^{m} {\theta}^*_i \dfrac{f_i(S^\epsilon_\theta,I^\epsilon_\theta)-f_i(S_\theta^*,I_\theta^\epsilon)}{S_\theta^\epsilon-S_\theta^*}\overline{S}^\epsilon\\
				&-\sum_{i=1}^{m} {\theta}^*_i \dfrac{f_i(S_\theta^*,I_\theta^\epsilon)-f_i(S_\theta^*,I_\theta^*)}{I^\epsilon_\theta-I_\theta^*}\overline{I}^\epsilon, \quad &&\text{ in } Q_T,\\
				\overline{I}^\epsilon_t&-d_2 \Delta \overline{I}^\epsilon=\sum_{i=1}^{m} \tilde{\theta}_i f_i(S_\theta^\epsilon,I_\theta^\epsilon)+\sum_{i=1}^{m} {\theta}^*_i \dfrac{f_i(S^\epsilon_\theta,I^\epsilon_\theta)-f_i(S_\theta^*,I_\theta^\epsilon)}{S_\theta^\epsilon-S_\theta^*}\overline{S}^\epsilon\\[1ex]
				&+\sum_{i=1}^{m} {\theta}^*_i \dfrac{f_i(S_\theta^*,I_\theta^\epsilon)-f_i(S_\theta^*,I_\theta^*)}{I^\epsilon_\theta-I_\theta^*}\overline{I}^\epsilon-\gamma \overline{I}^\epsilon , \quad &&\text{ in } Q_T,\\[1ex]
				\overline{R}^\epsilon_t&-d_3 \Delta \overline{R}^\epsilon=\gamma \overline{I}^\epsilon, \quad &&\text{ in } Q_T,\\[1ex]
				\nabla &\overline{S}^\epsilon \cdot \overrightarrow{n}=\nabla \overline{I}^\epsilon \cdot \overrightarrow{n}=\nabla \overline{R}^\epsilon \cdot \overrightarrow{n}=0, \quad &&\text{ on } \Sigma_T,\\
				(\overline{S}^\epsilon&(.,0),\overline{I}^\epsilon(.,0),\overline{R}^\epsilon(.,0))=(0,0,0), \quad &&\text{ in } \mathcal{O}.
			\end{split}
		\end{cases}
	\end{equation}
	By local optimality of $\theta^*$, we use Theorem \ref{frechet} and recall the definition of $\mathcal{J}$ to obtain that
	$$
	\underset{\epsilon \rightarrow 0}{\lim} \bigg \Vert \frac{S^\epsilon_\theta-S_\theta^*}{\epsilon}-\overline{S} \bigg \Vert_{L^2(Q_T)}=\underset{\epsilon \rightarrow 0}{\lim} \bigg \Vert \frac{I^\epsilon_\theta-I_\theta^*}{\epsilon}-\overline{I} \bigg \Vert_{L^2(Q_T)}=0,
	$$
	and so
	\begin{equation}\label{ineq}
		\begin{split}
			\underset{\epsilon \rightarrow 0}{\lim}\dfrac{\mathscr{J}(\theta^\epsilon)-\mathscr{J}(\theta^*)}{\epsilon}&=2\bigg\langle S^*_\theta-S^{ob},\overline{S} \bigg \rangle_{L^2(Q_T)\times L^2(Q_T)}+2\bigg\langle I^*_\theta-I^{ob},\overline{I} \bigg \rangle_{L^2(Q_T)\times L^2(Q_T)}\\
			&+\sigma \langle \theta^*,\tilde{\theta} \rangle_{\mathbb{R}^m\times \mathbb{R}^m} \geq 0.
		\end{split}
	\end{equation}
	Now, we multiply Equations $\eqref{protosens}_1$,  $\eqref{protosens}_2$ and $\eqref{protosens}_3$ by $P_1$, $P_2$ and $P_3$, respectively and sum up the resulting equations to eventually acquire the following identity:
	\begin{align*}
		2\bigg\langle S^*_\theta-S^{ob},\overline{S} \bigg \rangle_{L^2(Q_T)\times L^2(Q_T)}&+2\bigg\langle I^*_\theta-I^{ob},\overline{I} \bigg \rangle_{L^2(Q_T)\times L^2(Q_T)}\\
		&=\sum_{i=1}^m \tilde{\theta}_i \int_{Q_T} f_i(S_\theta^*,I_\theta^*)(P_2-P_1)dxdt.
	\end{align*}
	The result is concluded by injecting the previous identity into Inequality \eqref{ineq}.
\end{proof}

\section{A numerical example}
\label{section4}
We provide a numerical implementation of our proposed approach.  First, we consider a one dimensional domain $\mathcal{O}:=(0,2),$ a time horizon $(0,T):=(0,10)$ and an initial state $(S_0,I_0,R_0)=(0.85,0.15,0).$  Additionally, we set $\gamma=0.4, d_1=d_2=0.5,$ and  $\sigma=0.0001.$ As for the number of incidence functions, we set $m=3$ and $f_1(S,I):=0.4 SI;\; f_2(S,I)=\frac{0.4 SI}{1+I};\; f_3(S,I)=\frac{0.4 SI}{S+I+R}.$

\noindent
We numerically solve Problem \eqref{prob2} based on Algorithm \ref{myalgo}. Figures \ref{fig1} and \ref{fig2} show the obtained densities of the susceptible and infected populations, separately obtained for each incidence function (case m=1). It can be seen that although the disease transmission rate $\beta$ is the same, the generated densities are different, especially when it comes to the standard bilinear function. This confirms that, the calibration of the model with different incidence functions can lead to different computed solutions, some of which may not closely align with the observations. On the other hand, from Fig. \ref{fig3}, we observe that Algorithm \ref{myalgo} successfully identified the optimal weights allowing to minimize the cost functional J. However, it can be seen that the number of iterations required for such an outcome is higher, which is on the one hand due to the Landweber algorithm which is known to be slow, especially for a lower tolerance $\varepsilon$ and in the case where the optimized variable acts in a bilinear way on the state, which is the case here. As for the computed solution to Model 1, in the case where the weights are assigned the optimal values $\theta_1^*$, $\theta_2^*$ and $\theta_3^*$, it can be seen from Figs. \ref{fig4} and \ref{fig5} that the densities of the susceptible and infected populations closely align with the experimental observations. Finally, by taking the convex combination of the chosen incidence functions using the optimal weights $\theta_1^*$, $\theta_2^*$ and $\theta_3^*$, we obtain the incidence function that corresponds to the experimental data as shown by Fig. \ref{fig20}.


\begin{algorithm}[H]
	\label{myalgo}
	\caption{Landweber iteration algorithm solving Problem \eqref{prob2}}
	
	\textbf{Input:} Initial guess $\theta^0$ and tolerance $\varepsilon = 0.0001$.
	
	\textbf{Output:} Identified weights $\theta^*_1 = 0.245$, $\theta^*_2 = 0.294$, $\theta^*_3 = 0.461$.
	
	\begin{enumerate}
		\item Solve the direct problem with $\theta_1 = 0.2$, $\theta_2 = 0.3$, $\theta_3 = 0.5$ to obtain $(S, I)$.
		\item Generate experimental observations $(S^{ob}, I^{ob})$ by adding random noise to $(S, I)$.
		\item Set $k = 0$.
		\item Repeat until $\lvert \mathscr{J}(\theta^{k+1}) - \mathscr{J}(\theta^k) \rvert < \varepsilon$:
		\begin{enumerate}
			\item Solve Problem \eqref{proto} and Problem \eqref{adj} to evaluate $\nabla \mathscr{J}(\theta^k)$.
			\item Approximate $\mathscr{J}(\theta^k)$ using the two-dimensional Simpson's method.
			\item Perform a line search to find $t_k$.
			\item Update $\theta^{k+1} = \mathcal{P}_{\mathscr{U}_{ad}^\theta}\!\left(\theta^k - t_k \nabla \mathscr{J}(\theta^k)\right)$.
			\item Increment $k$.
		\end{enumerate}
	\end{enumerate}
\end{algorithm}
\begin{figure}[H]
	\begin{minipage}[b]{0.4\textwidth}
		\includegraphics[width=\linewidth]{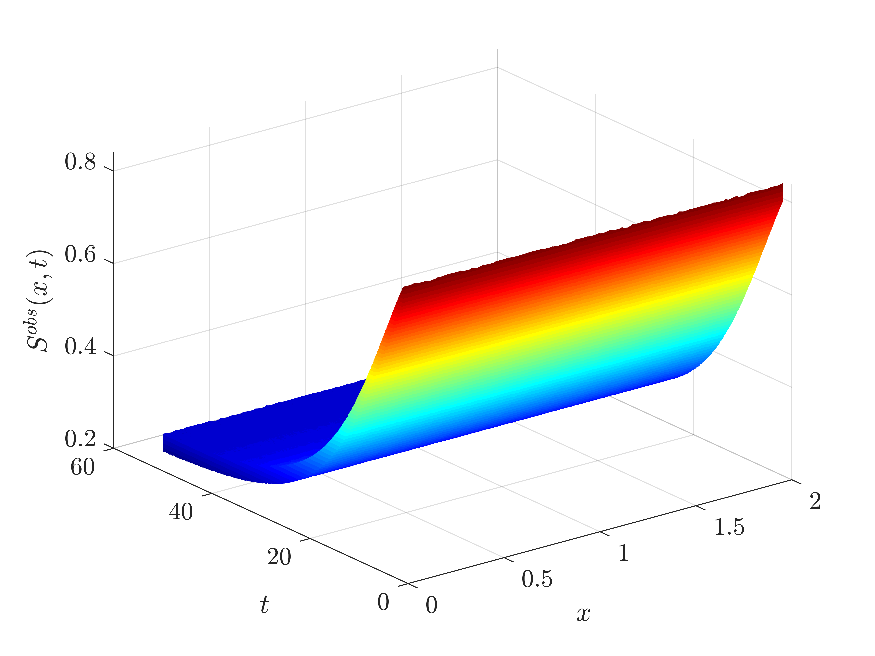} 
	\end{minipage}
	\begin{minipage}[b]{0.4\textwidth}
		\includegraphics[width=\linewidth]{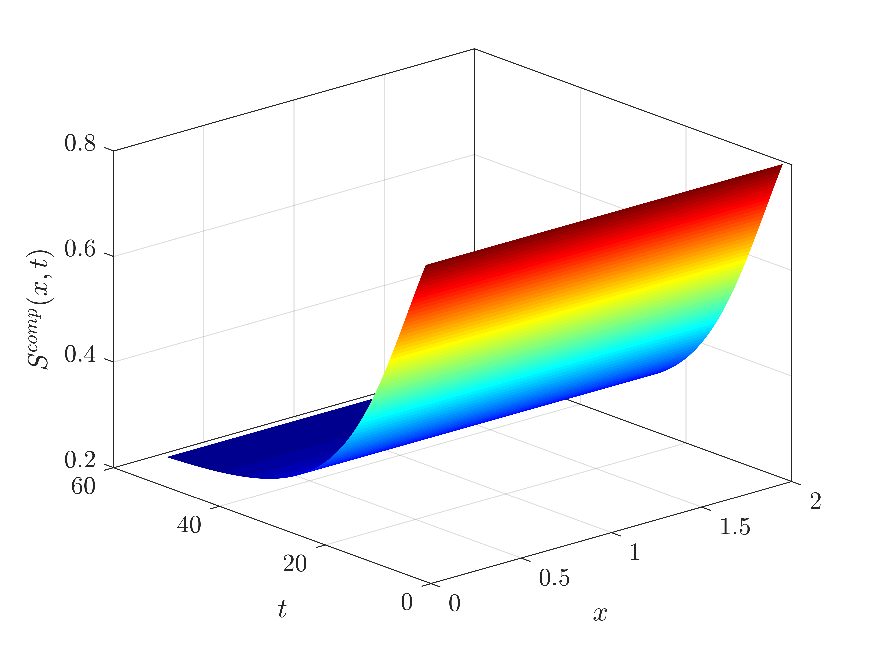} 
	\end{minipage}
	\caption{Noisy experimental observation of the susceptible population (left) and computed density of the susceptible population (right).}
	\label{fig4}
\end{figure}
\begin{figure}[H]
	
	\begin{minipage}[b]{0.4\textwidth}
		\centering
		\includegraphics[width=\linewidth]{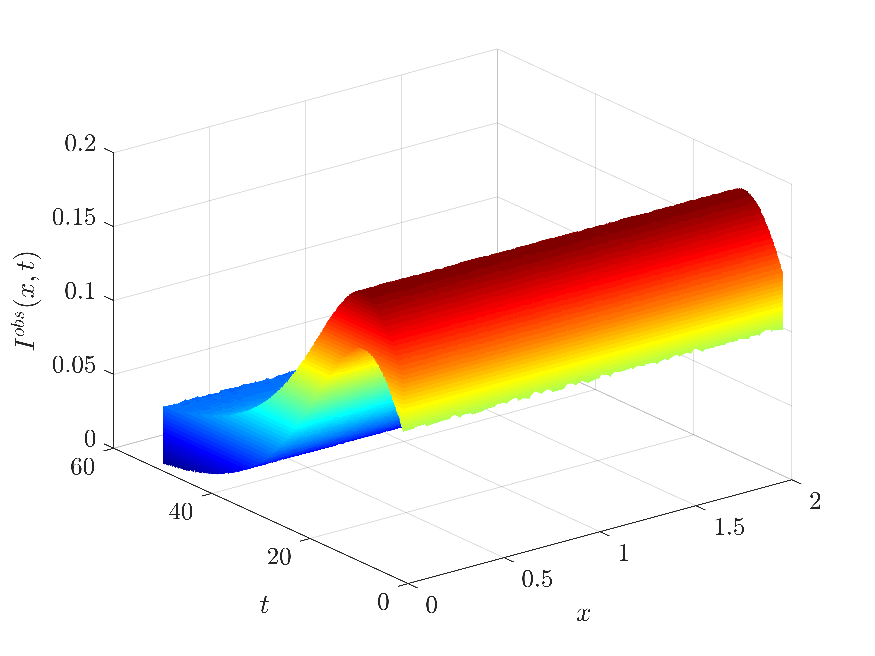} 
	\end{minipage}
	\begin{minipage}[b]{0.4\textwidth}
		\centering
		\includegraphics[width=\linewidth]{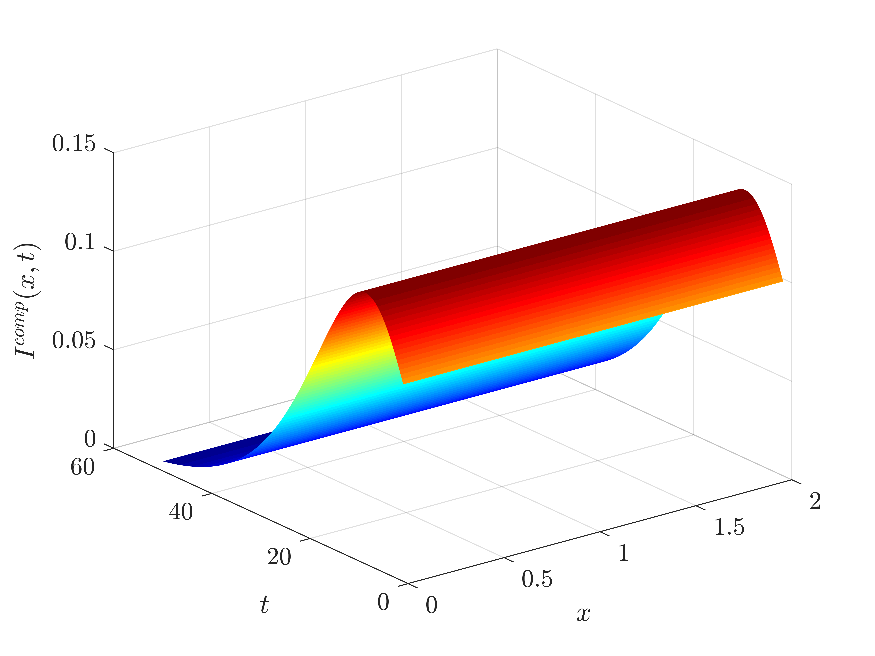} 
	\end{minipage}
	\caption{Noisy experimental observation of the infected population (left) and computed density of the infected population (right).}
	\label{fig5}
\end{figure}
\begin{figure}[H]
	\includegraphics[width=1\linewidth]{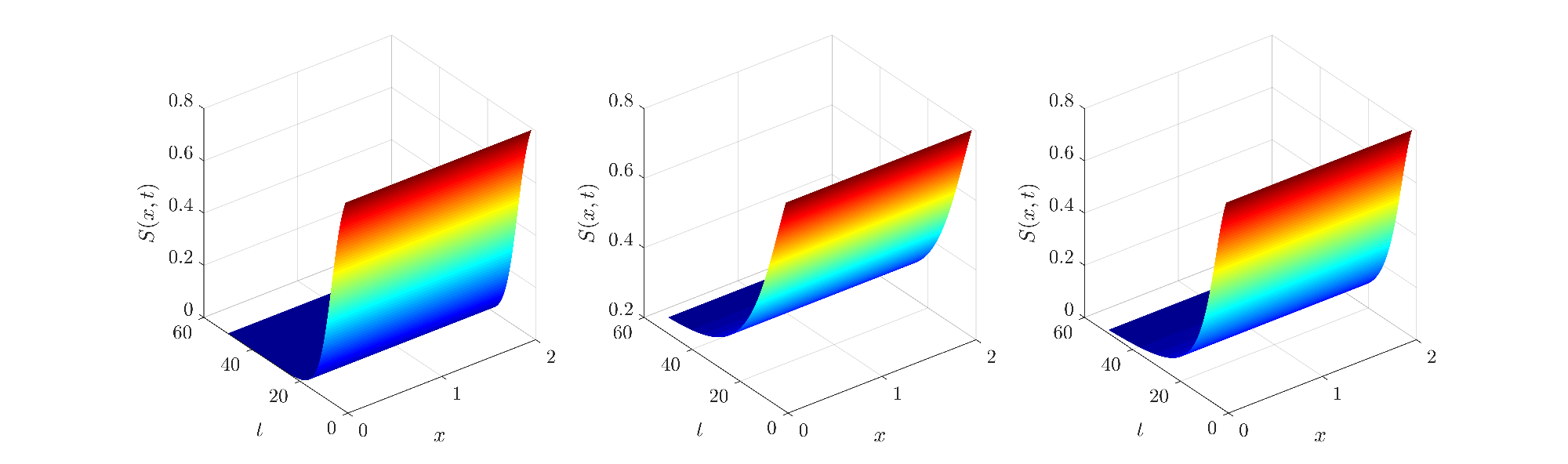} 
	\centering
	\caption{Comparison between the generated susceptible densities for independent simulations in the case $m=1$: $(S,I)\mapsto 0.4 S I$ (left), $(S,I)\mapsto \dfrac{0.4 SI}{1+I}$ (middle) and $(S,I)\mapsto \dfrac{0.4 SI}{S+I+R}$ (right).}
	\label{fig1}
\end{figure}
\begin{figure}[h]
	\centering
	\includegraphics[width=1\linewidth]{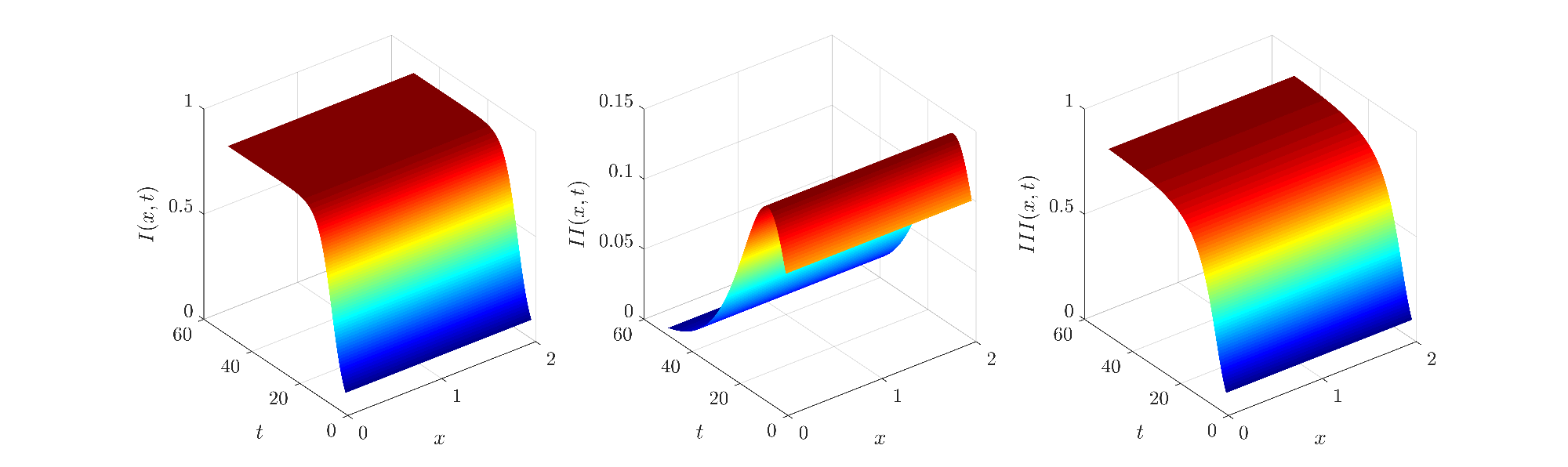} 
	\caption{Comparison between the generated infected densities for independent simulations in the case $m=1$: $(S,I)\mapsto 0.4 S I$ (left), $(S,I)\mapsto \dfrac{0.4 SI}{1+I}$ (middle) and $(S,I)\mapsto \dfrac{0.4 SI}{S+I+R}$ (right).}
	\label{fig2}
\end{figure}
\begin{figure}[h]
	\centering
	\includegraphics[width=0.4\linewidth]{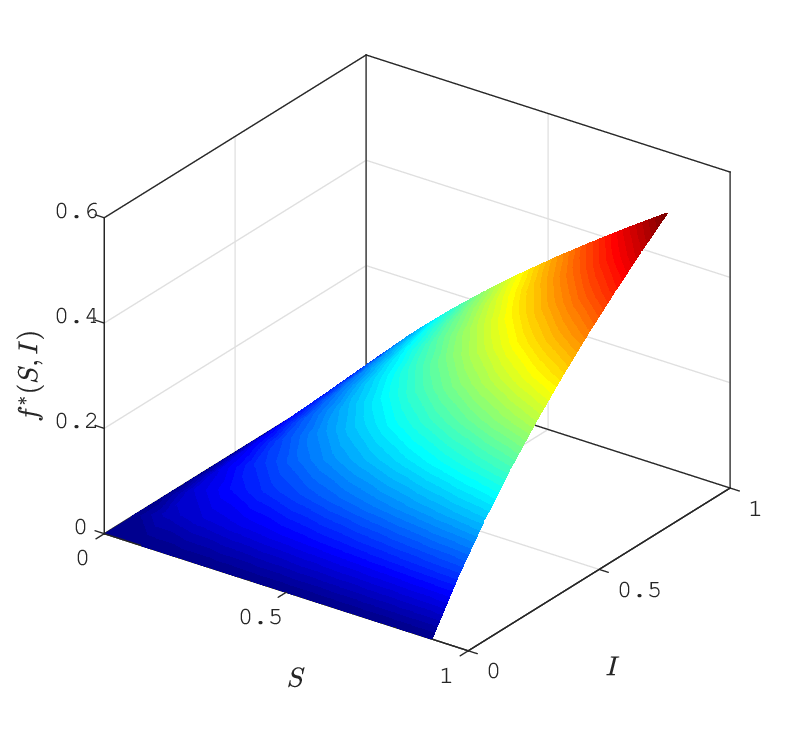} 
	\caption{Optimal incidence function corresponding to the generated experimented data.}
	\label{fig20}
\end{figure}
\begin{figure}[H]
	\centering
	\includegraphics[width=1\linewidth]{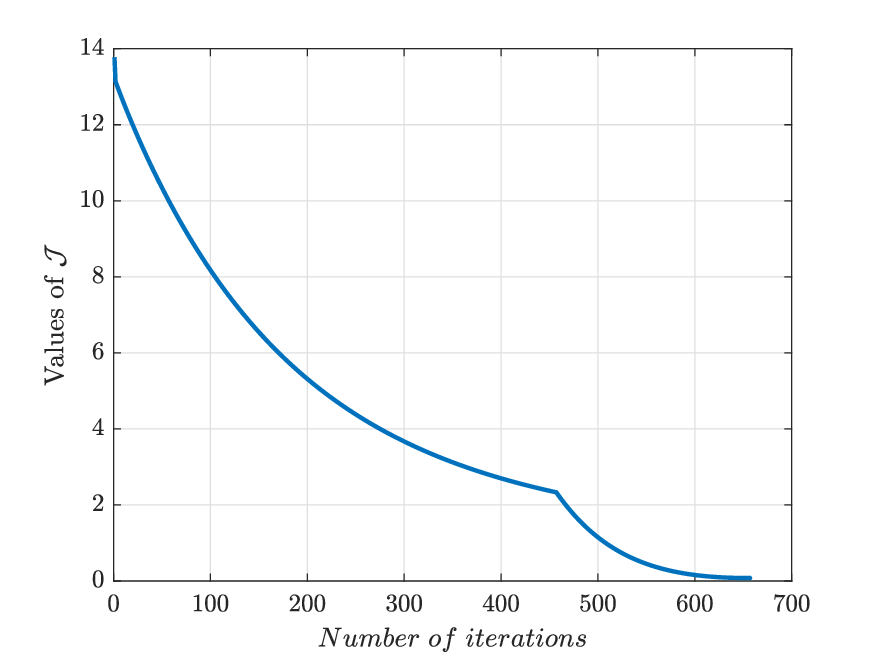}
	\caption{Evolution of the cost functional with respect to the number of iterations.}
	\label{fig3}
\end{figure}

\begin{remark}
	Note that in the second step of Algorithm \ref{myalgo}, we are assuming that real observations are taking the form of a small random perturbation of the solution corresponding to Model \eqref{proto}. Although it seems restrictive, such an assumption is well-known to be used in the literature when it comes to the validation of theoretical approaches pertaining to inverse problems. Actually, the practical validity of such an assumption relies on how well the considered model is theoretically built. For such cases, we refer the reader for example to \cite[Section 5]{yang2018landweber}. 
\end{remark}
\section{Discussion and Future Work}
\label{section5}
Extensive research in spatio-temporal epidemiology, where various incidence functions have been independently used to capture several biological characteristics, has raised the question of how one can determine which incidence function is more suitable. This paper presented a PDE-constrained optimization approach allowing to identify the weights at which a given set of incidence functions contribute to the dynamics of a certain reaction--diffusion epidemic model. We point out that the approach developed in this paper can be extended to other  contexts such as ecological models, where the functional response can take various forms \cite{mehdaoui2024new}, and economic models where the choice of the production function imposes a major problem \cite{mehdaoui2025state}. Let us also mention that the techniques developed here followed by the ones in \cite{xiang2015solving,coronel2019note,coronel2021existence,chang2024time} can provide a fresh perspective on the calibration of reaction--diffusion epidemic models, by simultaneously optimizing the choice of the incidence function and its parameters. This  two-stage optimization problem is of a great interest and serves as a future research direction. On the other hand, we mention that the idea of considering a convex combination of incidence functions in epidemic models as in Model \eqref{proto} has not been explored in basic cases (absence of diffusion). Thus, incorporating such a convex combination in such cases and analyzing the effects of weights on stability and asymptotic behavior are also promising research directions, which we will explore in future work.
\section*{Statements and Declarations}
\subsection*{Data availability}
Not applicable.
\subsection*{Funding}
No funding was received.
\subsection*{Conflict of interest}
The authors declare that they have no conflicts of interest.

\bibliographystyle{apalike}
\bibliography{MT_sp}	
\end{document}